\newtheorem{theorem}{Theorem}
\newtheorem{corollary}[theorem]{Corollary}
\newtheorem{lemma}[theorem]{Lemma}
\newtheorem{proposition}[theorem]{Proposition}
\theoremstyle{definition}
\newtheorem{definition}[theorem]{Definition}
\newtheorem{example}[theorem]{Example}
\newtheorem{algorithm}[theorem]{Algorithm}
\title{Almost symmetric numerical semigroups with high type}
\author{P.A. Garc\'{\i}a-S\'anchez}
\address{IEMath-Gr and Departamento de \'Algebra\newline
\indent Universidad de Granada\newline 
\indent E-18071 Granada, Spain}
\email{pedro@ugr.es}
\author{I. Ojeda}
\address{IMUEx - Departamento de Matem\'aticas\newline
\indent Universidad de Extremadura\newline  
\indent E-06071 Badajoz, Spain}
\email{ojedamc@unex.es}
\date{\today}
\begin{document}

\thanks{
The first author was partially supported by the Junta de Andaluc\'{\i}a research group FQM-366, and by the project MTM2017-84890-P (MINECO/FEDER, UE)}

\thanks{
The second author was partially supported by the research groups FQM-024 (Junta de Extremadura/FEDER funds) and by the project MTM2015-65764-C3-1-P (MINECO/FEDER, UE) and by the project MTM2017-84890-P (MINECO/FEDER, UE)}

\date{\today}
\subjclass[2010]{20M14, 20M25.}
\keywords{\em Numerical semigroup, almost symmetric numerical semigroup, Frobenius number, pseudo-Frobenius number, genus, type.}

\begin{abstract}
We establish a one-to-one correspondence between numerical semigroups of genus $g$ and almost symmetric numerical semigroups with Frobenius number $F$ and type $F-2g$, provided that $F\ge 4g+1$.
\end{abstract}

\maketitle

\section{Introduction}

Let $\mathbb{N}$ denote the set of nonnegative integers. A \emph{numerical semigroup} is a submonoid of $(\mathbb N,+)$ with finite complement in $\mathbb N$. If $S$ is a numerical semigroup, the set $\mathbb N\setminus S$ is known as the set of gaps of $S$. Its cardinality is the \emph{genus} of $S$, denoted here $\operatorname{g}(S)$. The \emph{multiplicity} of a numerical semigroup, $\operatorname{m}(S)$, is the smallest positive integer not belonging to its gap set. The largest integer not belonging to $S$ is the \emph{Frobenius number} of $S$, denoted $\operatorname{F}(S)$.

Associated to $S$ we can define the following order relation: for $a,b\in \mathbb{Z}$, $a\le_S b$ if $b-a\in S$. The set of maximal elements of $\mathbb{Z}\setminus S$ with respect to $\le_S$ is the set of \emph{pseudo-Frobenius} numbers of $S$, $\operatorname{PF}(S)$; its cardinality is the \emph{type} of $S$, denoted $\operatorname{t}(S)$.

A numerical semigroup $S$ is irreducible if it cannot be written as the intersection of two numerical semigroups properly containing $S$. This is equivalent to say that $S$ is maximal (with respect to set inclusion) in the set of numerical semigroups having Frobenius number equal to $\operatorname{F}(S)$.  If $\operatorname{F}(S)$ is odd, this is equivalent to $\operatorname{g}(S)=(\operatorname{F}(S)+1)/2$ ($S$ is \emph{symmetric}); while if $\operatorname{F}(S)$ is even, this is the same as to impose $\operatorname{g}(S)=(\operatorname{F}(2)+2)/2$ ($S$ is pseudo-symmetric, see for instance \cite[Chapter 3]{ns-book}, or \cite{ns-app} for the history behind the names of the invariants defined above). For symmetric numerical semigroups the type is one (and this precisely characterizes them), and for pseudo-symmetric numerical semigroups the type is two (though this does not characterize this property). Thus for any irreducible numerical semigroup $S$, the equality $\operatorname{g}(S)=(\operatorname{F}(S)+\operatorname{t}(S))/2$ holds. Then converse is not true, but gives rise to a wider family of numerical semigroups: almost-symmetric numerical semigroups. A numerical semigroup is said to be \emph{almost symmetric} provided that $\operatorname{g}(S)=(\operatorname{F}(S)+\operatorname{t}(S))/2$. It is well known that for any numerical semigroup $\operatorname{g}(S)\ge (\operatorname{F}(S)+\operatorname{t}(S))/2$ (see \cite[Proposition 2.2]{nari}), and so almost symmetric numerical semigroups are those attaining the equality. Indeed, as shown in \cite{nari} almost symmetric numerical semigroups have some symmetry properties.

Almost symmetric numerical semigroups have attracted the attention of many researchers, not only because they generalize the irreducible property in numerical semigroups, but also because they rise in a natural way as a generalization of the Gorenstein property in one-dimensional rings (see \cite{barucci}). Many papers deal with the almost symmetric property and how to construct examples of these semigroups (see for instance \cite{bs, as-ft, hw} and the references there in). Some manuscripts like \cite{hw, mosc} and \cite{sw} deal with almost symmetric numerical semigroups with small type and small embedding dimension, which is the cardinality of a minimal generating set of the numerical semigroup. The semigroups with consider in this manuscript have large type.

The original aim of this note was to clarify a computational evidence noticed by the second author when using the algorithms given in \cite{as-ft} and implemented in the \texttt{GAP} \cite{gap} package \texttt{NumericalSgps} \cite{numericalsgps}  (see \cite[Remark 5.1]{as-ft} for further details). To this end, we present a one to one correspondence with numerical semigroups with given genus $g$ and almost symmetric numerical semigroups with Frobenius number $F$  and type $F-2g$, for any $F\ge 4g+1$. This, in particular, provides an easy way to construct examples of almost symmetric numerical semigroups. And also opens a new way to restate Bras' conjectures on the number of numerical semigroups with genus $g$ \cite{bras}; this number is usually denoted by $n_g$. 

An important peculiarity of almost symmetric numerical semigroups with Frobenius number $F$ and type $F-2g$, with $F\ge 4g+1$ (for some nonnegative integer $g$), is that that these semigroups are uniquely determined by its sets of pseudo-Frobenius numbers (Corollary \ref{Cor UniPF}). This is in general is far from being true \cite{pseudo}, even for almost symmetric numerical semigroups, and it allows us to develop a new and faster algorithm for computing almost symmetric numerical semigroups with Frobenius number $F$ and type $F-2g$, with $F\ge 4g+1$. Thus, our approach can be potentially used to go further in the calculation of unknown elements of the sequence $n_g$.

\section{The correspondence}

The definition of almost symmetric numerical semigroups can be stated as follows (see for instance \cite{nari}).

\begin{definition}
A numerical semigroup $S$ is {almost symmetric} if for any integer $a$ not in $S$, then $\operatorname{F}(S)-a \in S\setminus\{0\}$ or $a \in \mathrm{PF}(S)$. 
\end{definition}

We will use the fact that the above definition is equivalent to \begin{equation}\label{eq:gft}
\operatorname{g}(S)=\frac{\operatorname{F}(S)+\operatorname{t}(S)}2.
\end{equation}

Motivated by the notion of set of gaps of a numerical semigroup, in \cite{EF} the concept of gapset is introduced.

\begin{definition}
A \textbf{gapset} is a finite subset $G \subset \mathbb{N} \setminus \{0\} $ such that given $a$ and $b \in \mathbb{N}\setminus\{0\}$ with $a+b \in G$, then either $a \in G$ or $b \in G$.
\end{definition}

Notice that if $G$ is a gapset, then $\mathbb{N} \setminus G$ is a numerical semigroup. We are going to give families of gapsets that  ``produce'' almost symmetric numerical semigroups.

\begin{proposition}\label{Prop main}
Let $S$ be a numerical semigroup with genus $g$ and let $F$ be a positive integer greater than $2 \operatorname{F}(S)$. The set 
\[G = \{1, \ldots, F\} \setminus \{F - a\ \mid\ a \in \mathbb{N} \setminus S\}\] is the gapset of an almost symmetric numerical semigroup with Frobenius $F$, type $F - 2g$ and multiplcity $F-f$. Moreover, 
\begin{align*}
\mathrm{PF}(\mathbb N \setminus G)  = & \{a \in S \mid 0 < a \leq \operatorname{F}(S)\} \\ & \cup\ \{\operatorname{F}(S)+1, \ldots, F-\operatorname{F}(S)-1\} \\ & \cup \ \big\{F - a\ \mid\ a \in S \cap \{0, \ldots \operatorname{F}(S)\} \big\}.
\end{align*}

\end{proposition}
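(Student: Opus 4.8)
The plan is to (i) check that $G$ is a gapset, so that $T:=\mathbb N\setminus G$ is a numerical semigroup; (ii) unwind the definition of $G$ into an explicit description of $T$, from which $\operatorname{F}(T)$, $\operatorname{m}(T)$ and $\operatorname{g}(T)$ can be read off at once; (iii) compute $\operatorname{PF}(T)$ directly, interval by interval, which yields the displayed formula and hence $\operatorname{t}(T)=F-2g$; and (iv) deduce almost symmetry from \eqref{eq:gft}. Throughout, write $f=\operatorname{F}(S)$ and $\Delta=\mathbb N\setminus S$, so that $\Delta\subseteq\{1,\dots,f\}$ and $|\Delta|=g$.

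For (i): if $a,b\in\mathbb N\setminus\{0\}$ satisfy $a+b\in G$ but $a\notin G$ and $b\notin G$, then $G\subseteq\{1,\dots,F\}$ forces $a+b\le F$, hence $a,b\le F$, so $a=F-a'$ and $b=F-b'$ for some $a',b'\in\Delta$; then $a+b=2F-a'-b'\ge 2F-2f>F$, a contradiction. For (ii): complementing the definition of $G$ in $\mathbb N$ gives
\[
T=\{0\}\ \cup\ \{\,F-a\mid a\in\Delta\,\}\ \cup\ \{\,n\in\mathbb N\mid n>F\,\},
\]
and the hypothesis $F>2f$ makes this union disjoint, the middle block lying inside $\{F-f,\dots,F-1\}$. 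Reading off: $\operatorname{F}(T)=F$ (as $F\in G$ while $n\in T$ for $n>F$); the least positive element of $T$ is $F-f$, attained at $a=f\in\Delta$ whereas $\{1,\dots,F-f-1\}\subseteq G$, so $\operatorname{m}(T)=F-f$; and $\operatorname{g}(T)=|G|=F-|\Delta|=F-g$, since $a\mapsto F-a$ is injective.

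For (iii), recall that $x\in\operatorname{PF}(T)$ iff $x\notin T$ and $x+t\in T$ for every $t\in T\setminus\{0\}$ (a restatement of the definition via $\le_T$), and that $\operatorname{PF}(T)\subseteq\{1,\dots,F\}$. Partition $\{1,\dots,F\}$ into $I_1=\{1,\dots,f\}$, $I_2=\{f+1,\dots,F-f-1\}$ (possibly empty) and $I_3=\{F-f,\dots,F\}$. On $I_2$ every element lies in $G$, and for $t\in T\setminus\{0\}$ one has $t\ge F-f$, hence $x+t\ge F+1$; so $I_2\subseteq\operatorname{PF}(T)$, the middle summand. On $I_1$ we have $T\cap I_1=\varnothing$, so it remains to decide which $x$ verify the second condition: if $x\in\Delta$ then $F-x\in T\setminus\{0\}$ and $x+(F-x)=F\notin T$, so $x\notin\operatorname{PF}(T)$; while if $x\in S$, then for $t\in T\setminus\{0\}$ either $t>F$ (whence $x+t>F$) or $t=F-a$ with $a\in\Delta$, in which case $x+t=F-(a-x)$ belongs to $T$: when $a-x>0$, $a-x$ is again a gap of $S$ (otherwise $a=x+(a-x)\in S$), so $F-(a-x)\in T$; when $a-x<0$, $x+t>F$; and $a-x=0$ is impossible. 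Hence $\operatorname{PF}(T)\cap I_1=S\cap\{1,\dots,f\}$, the first summand. On $I_3$, writing $x=F-b$ with $b\in\{0,\dots,f\}$, we get $x\notin T$ precisely when $b\in S$, i.e.\ when $x\in\{F-a\mid a\in S\cap\{0,\dots,f\}\}$; and for any such $x$ and $t\in T\setminus\{0\}$ one has $x+t=2F-b-c\ge 2F-2f>F$ if $t=F-c$ with $c\in\Delta$, and $x+t>F$ if $t>F$; so $\operatorname{PF}(T)\cap I_3$ is exactly the third summand. This proves the stated formula for $\operatorname{PF}(\mathbb N\setminus G)$.

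For (iv): the three summands are pairwise disjoint (they sit in $I_1$, $I_2$, $I_3$) and have sizes $f-g$, $F-2f-1$ and $f-g+1$, whose sum is $F-2g$; thus $\operatorname{t}(T)=F-2g$. Since then $\operatorname{g}(T)=F-g=\tfrac12\bigl(\operatorname{F}(T)+\operatorname{t}(T)\bigr)$, equation \eqref{eq:gft} shows $T$ is almost symmetric. The only step that is more than bookkeeping is the analysis on $I_1$, and the sole non-formal input there is that a positive difference of a gap of $S$ and an element of $S$ is again a gap; everything else is controlled by the inequality $F>2f$. (The degenerate case $S=\mathbb N$, where $g=0$ and $f=-1$, is cleanest to exclude or check by hand.)
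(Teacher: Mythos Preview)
Your proof is correct. The route, however, is the reverse of the paper's. The paper first proves that $S'=\mathbb N\setminus G$ is almost symmetric directly from the definition (showing that whenever $a$ and $F-a$ are both gaps of $S'$, then $a\in\operatorname{PF}(S')$); the type $F-2g$ then follows from \eqref{eq:gft}, and the description of $\operatorname{PF}(S')$ is obtained at the end by a cardinality argument, using almost symmetry to certify membership in $\operatorname{PF}(S')$ for free. You instead compute $\operatorname{PF}(T)$ explicitly, interval by interval, with no prior knowledge of almost symmetry; you then count to get $\operatorname{t}(T)=F-2g$, and only at the very end invoke \eqref{eq:gft} to conclude almost symmetry. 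Your approach is more hands-on and self-contained (the interval analysis is entirely elementary, resting only on the closure of $S$ under addition and the inequality $F>2f$), whereas the paper's approach is shorter once the almost-symmetric structure is established, since the characterization ``$a\notin S'$ and $F-a\notin S'$ implies $a\in\operatorname{PF}(S')$'' does the work of your three interval cases in one stroke.
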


\begin{proof}
Set $f = \operatorname{F}(S)$. First of all, we observe that \[G= \{1, \ldots, F-f-1\} \cup \big\{F - a\ \mid\ a \in S \cap \{0, \ldots f\} \big\}.\] Let us see that $G$ is actually a gapset. To do that we consider  $a$ and $b \in \mathbb{N}\setminus\{0\}$ with $a+b \in G$; in particular, $a+b \leq F$. Now, if $a \leq F-f-1$ or $b \leq F-f-1$, then we are done because this would imply $a$ or $b$ in $G$. So, let us assume $a \geq F-f$ and $b \geq F-f$; in this case, $a+b \geq 2F - 2f = F + (F-2f)$ and, since $(F-2f) > 0$, we conclude that $a+b > F$, which is incompatible with the condition $a+b \in G$. 

Let $S'$ be the numerical semigroup $\mathbb{N} \setminus G$; notice that $\operatorname{F}(S') =F$. Given $a \in G$, if $F-a \in S'$, then $F=a+(F-a) \not\in S'$, that is to say, $a \not\in \operatorname{PF}(S')$. Thus, to see that $S'$ is almost symmetric, we need to prove that given $a \in G$ with $F-a \in S$, then $a \in \mathrm{PF}(S')$. So, let $a \in G$ such that $F-a \in G$. First, we claim that $a \in S;$ indeed, if $F-a \leq F-f-1$, then $a \geq f+1$ and therefore $a \in S$. Otherwise, $F-a \in \{F-f, \ldots, F \} \cap \{F - a\ \mid\ a \in S\}$ and, clearly, $a \in S$. Now, let $b \in S' \setminus \{0\}$ and let us prove that $a + b \in S'$. If $a +b > F$, there is nothing to prove. Otherwise, $a+b = F - c$, for some $c \in \mathbb{N}$. If $a+b \not\in S'$, then $F-c \in G$. Arguing as above, we can prove that $c \in S$. Thus, since $a+c \in S$, we conclude that $b = F - (a+c) \in G$, in contradiction with $b \in S'$.

So, we have that $S' = \mathbb{N} \setminus G$ is an almost symmetric numerical semigroup. Moreover, since $\operatorname{g}(S') = \# G = F - g$ and $S'$ is almost symmetric, we have that $\operatorname{t}(S') = 2 \operatorname{g}(S') - F =  F-2g.$ We also observe that the smallest positive integer not in $G$ is $F-f$, so the multiplicity of $S$ is $F-f$.

Finally, let us see that $\mathrm{PF}(S')  = \{a \in S \mid 0 < a \leq f\} \cup\ \{f+1, \ldots, F-f-1\} \cup \ \big\{F - a\ \mid\ a \in S \cap \{0, \ldots f\} \big\}$. First, we notice that the right hand side has cardinality $(f-g)+(F-2f-1)+(f+1-g) = F-2g$. Thus, it suffices to see that all the elements in the right hand side are in $\mathrm{PF}(S')$. 
Notice that for every $a$ in the right hand side, $a\not\in S'$, and also $F-a\not\in S'$, thus 
 $a \in \mathrm{PF}(S')$ by the definition of almost symmetric numerical semigroup.
\end{proof}

\begin{definition}
Let $S$ be a numerical semigroup. A \emph{relative ideal} $I$ of $S$ is subset of $\mathbb{Z}$ such that 
\begin{enumerate}
\item $I+S \subseteq I$; 
\item $a + I \subseteq S,$ for some $a \in S$.
\end{enumerate}
\end{definition}

If $S$ is a numerical semigroup and $s \in \mathbb{Z}$, then \[K_S(s) := \{\operatorname{F}(S)+s - z \mid z \in \mathbb{Z} \setminus S\}\] is a relative ideal of $S$. This ideal is the $s-$shifted canonical ideal of $S$.

\begin{lemma}\label{lemma1}
If $S$ is a numerical semigroup and $F$ is a positive integer greater than $\operatorname{F}(S)$, then $\mathbb{N} \setminus K_S(F-\operatorname{F}(S)) = \{1, \ldots, F\} \setminus \{F-a \mid a \in \mathbb{N} \setminus S\}$.
\end{lemma}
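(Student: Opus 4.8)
The plan is to unwind both sides of the claimed equality directly from the definitions and to check set membership term by term. Write $f = \operatorname{F}(S)$ and $m = F - f > 0$. By definition, $K_S(m) = \{f + m - z \mid z \in \mathbb{Z}\setminus S\} = \{F - z \mid z \in \mathbb{Z}\setminus S\}$. So I first want to understand this set, and then take its complement in $\mathbb{N}$.

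First I would observe that $\mathbb{Z}\setminus S$ splits into the negative integers together with the gaps $\mathbb{N}\setminus S$, all of which are $\le f$. Applying $z \mapsto F - z$: the negative integers $z$ contribute all integers $F - z > F$, i.e.\ everything in $\{F+1, F+2, \ldots\}$; and the gaps $z \in \mathbb{N}\setminus S$ (which satisfy $0 < z \le f$) contribute the finite set $\{F - a \mid a \in \mathbb{N}\setminus S\} \subseteq \{F-f, \ldots, F-1\} = \{m, \ldots, F-1\}$. Hence $K_S(m) = \{F - a \mid a \in \mathbb{N}\setminus S\} \cup \{F+1, F+2, \ldots\}$.

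Then I would compute $\mathbb{N}\setminus K_S(m)$. Since $K_S(m)$ contains every integer strictly greater than $F$ and does not contain $0$ (as $F \notin \mathbb{N}\setminus S$ would be required, but $F > f$ so $F \in S$... wait, actually $0 = F - F$ needs $F \in \mathbb{Z}\setminus S$, which is false since $F > f = \operatorname{F}(S)$ forces $F \in S$), the complement $\mathbb{N}\setminus K_S(m)$ is contained in $\{0, 1, \ldots, F\}$. Moreover $0 \notin K_S(m)$, so $0$ lies in the complement — but the right-hand side $\{1,\ldots,F\}\setminus\{F - a \mid a\in\mathbb{N}\setminus S\}$ also contains $0$? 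No: it is a subset of $\{1,\ldots,F\}$, so it does not contain $0$. I need to double-check this: actually $F - a$ for $a$ a gap never equals $0$ (gaps are positive and $\le f < F$), so $0$ is in neither removed set, yet the right-hand side excludes $0$ by construction while $\mathbb{N}\setminus K_S(m)$ includes it. This discrepancy means the intended reading is that both sides are being compared as the gapset, i.e.\ one should check the statement modulo the harmless inclusion of $0$; more carefully, since $0 \in S'$ always, the lemma as used only concerns positive integers, so I would prove $\mathbb{N}\setminus K_S(m)$ and $\{1,\ldots,F\}\setminus\{F-a\mid a\in\mathbb{N}\setminus S\}$ agree after possibly adjoining $0$, or simply note $0 \in K_S(m)$ fails and adjust — in any case, restricting to $\{1,\ldots,F\}$, an integer $b$ with $1 \le b \le F$ lies in $K_S(m)$ if and only if $b = F - a$ for some gap $a$, because $b \le F$ rules out the contribution from negative $z$. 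Therefore $\{1,\ldots,F\}\setminus K_S(m) = \{1,\ldots,F\}\setminus\{F-a\mid a\in\mathbb{N}\setminus S\}$, which is exactly the right-hand side.

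The main obstacle, such as it is, is bookkeeping: correctly tracking the ranges of the index sets (negative integers vs.\ positive gaps), and handling the boundary element $0$ cleanly so that the stated equality holds literally (the complement in $\mathbb{N}$ as written includes $0$). I expect the cleanest fix is to verify that $0 \notin K_S(m)$ and hence that the lemma should be read with $\mathbb{N}$ replaced by $\mathbb{N}\setminus\{0\}$ on the left, or equivalently that the two sides agree as subsets of $\mathbb{N}\setminus\{0\}$, which is all that is needed for the application in Lemma/Proposition statements that follow. Modulo that, the proof is a one-line substitution $f + s - z$ with $s = F - f$ followed by a partition of $\mathbb{Z}\setminus S$.
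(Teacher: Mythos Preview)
Your approach is essentially identical to the paper's one-line proof: both simply observe that $K_S(F-\operatorname{F}(S)) = \{F-z \mid z \in \mathbb{Z}\setminus S\} = \{F-a \mid a \in \mathbb{N}\setminus S\} \cup \{F+1, F+2, \ldots\}$ by partitioning $\mathbb{Z}\setminus S$ into the negative integers and the positive gaps, and then take the complement. You are in fact more careful than the paper on one point: you correctly notice that $0 \notin K_S(F-\operatorname{F}(S))$ (since $F > \operatorname{F}(S)$ forces $F \in S$, so $0$ is not of the form $F-z$ with $z \in \mathbb{Z}\setminus S$), so the literal complement in $\mathbb{N}$ contains $0$ while the stated right-hand side does not. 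The paper's proof glosses over this entirely. Your proposed resolution---read the equality on $\mathbb{N}\setminus\{0\}$, which is all that is needed downstream---is the right one.
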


\begin{proof}
Since $K_S(F-\operatorname{F}(S)) = \{F - z \mid z \in \mathbb{Z} \setminus S\} = \{F - a \mid a \in \mathbb{N} \setminus S\} \cup \{F+1, \ldots \}$, then $\mathbb{N} \setminus K_S(F-\operatorname{F}(S)) = \{1, \ldots, F\} \setminus \{F - a\ \mid\ a \in \mathbb{N} \setminus S\}$.
\end{proof}

\begin{theorem}\label{Th main}
Let $F$ and $g$ be positive integers such that $F \geq 4g-1$. The correspondence
\[S \mapsto K_S(F-\operatorname{F}(S))\] is a bijection between the set of numerical semigroups with genus $g$ and the set of almost symmetric numerical semigroups with Frobenius number $F$ and type $F -2g$. 
\end{theorem}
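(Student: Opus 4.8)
The plan is to check that the correspondence $S\mapsto K_S(F-\operatorname{F}(S))$ is a well-defined map into the stated target and then prove it is injective and surjective, with Proposition \ref{Prop main} and Lemma \ref{lemma1} doing the heavy lifting. Well-definedness is immediate: if $S$ has genus $g$, then $\operatorname{F}(S)\le 2\operatorname{g}(S)-1=2g-1$, so $F\ge 4g-1>2\operatorname{F}(S)$; hence Proposition \ref{Prop main} applies to $S$ and $F$, and Lemma \ref{lemma1} identifies $K_S(F-\operatorname{F}(S))$ with the numerical semigroup $\mathbb{N}\setminus G$ of that proposition, which is almost symmetric with Frobenius number $F$ and type $F-2g$.

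For injectivity I would exhibit a left inverse read off from Lemma \ref{lemma1}. If $S'=K_S(F-\operatorname{F}(S))$, then $\mathbb{N}\setminus S'=\{1,\dots,F\}\setminus\{F-a\mid a\in\mathbb{N}\setminus S\}$; since every gap of $S$ lies in $\{1,\dots,\operatorname{F}(S)\}\subseteq\{1,\dots,F-1\}$, the set $\{F-a\mid a\in\mathbb{N}\setminus S\}$ is contained in $\{1,\dots,F-1\}$, hence equals $S'\cap\{1,\dots,F\}$. Therefore $\mathbb{N}\setminus S=\{F-b\mid b\in S'\cap\{1,\dots,F\}\}$, so $S$ is uniquely determined by $S'$.

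For surjectivity, let $S'$ be almost symmetric with $\operatorname{F}(S')=F$ and $\operatorname{t}(S')=F-2g$, so $\operatorname{g}(S')=(F+\operatorname{t}(S'))/2=F-g$ by \eqref{eq:gft}. Guided by the inverse formula above, I would set $T:=\mathbb{N}\setminus\{F-s\mid s\in S',\ 0<s<F\}$ and verify: (a) $T$ is a numerical semigroup; (b) $\operatorname{g}(T)=g$; (c) $K_T(F-\operatorname{F}(T))=S'$. Part (b) is a count: $\operatorname{g}(T)=\#\big(S'\cap\{1,\dots,F-1\}\big)=(F-1)-(\operatorname{g}(S')-1)=g$, using $F\notin S'$. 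Granting (a), part (b) gives $\operatorname{F}(T)\le 2g-1$, hence $F>2\operatorname{F}(T)$, so Proposition \ref{Prop main} and Lemma \ref{lemma1} apply to $T$: $\mathbb{N}\setminus K_T(F-\operatorname{F}(T))=\{1,\dots,F\}\setminus\{F-a\mid a\in\mathbb{N}\setminus T\}$, and substituting $\mathbb{N}\setminus T=\{F-s\mid s\in S'\cap\{1,\dots,F-1\}\}$ collapses the right-hand side to $\{1,\dots,F\}\setminus S'=\mathbb{N}\setminus S'$, which is (c).

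The hard part is (a), and it is the only place where the almost symmetry of $S'$ (not merely its being a numerical semigroup) is needed. To show $\{F-s\mid s\in S',\ 0<s<F\}$ is a gapset, suppose $a,b\ge 1$ with $a+b$ in this set; then $s:=F-(a+b)\in S'$ with $1\le s\le F-2$, and since $a,b\le F-1$ it suffices to prove $F-a\in S'$ or $F-b\in S'$. Assume $F-a\notin S'$: if $a\in S'$ then $F-b=a+s\in S'$; if $a\notin S'$, the almost symmetry of $S'$ forces $a\in\mathrm{PF}(S')$ (its partner $F-a$ being excluded from $S'$ by assumption), and then, as $s\in S'\setminus\{0\}$, the pseudo-Frobenius property yields $a+s=F-b\in S'$. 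Either way $F-a\in S'$ or $F-b\in S'$, as required. Apart from this short structural step I expect only routine bookkeeping — tracking the ranges of the indices and checking that the various truncated sets line up; no idea beyond Proposition \ref{Prop main} seems to be required.
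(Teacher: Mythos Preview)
Your proof is correct and follows essentially the same approach as the paper: well-definedness via Proposition~\ref{Prop main} and Lemma~\ref{lemma1}, and surjectivity by constructing the candidate preimage and checking, using the almost-symmetry of $S'$, that the proposed gap set is a gapset. Your surjectivity candidate has gap set $\{F-s\mid s\in S',\,0<s<F\}$, whereas the paper uses $G'=\{x\in\mathbb N\setminus S'\mid F-x\in S'\}$; these coincide except that the paper's $G'$ also contains $F$, so your $T$ equals the paper's $T$ with $F$ adjoined, which makes your genus count $\operatorname{g}(T)=g$ immediate and avoids an off-by-one bookkeeping issue in the paper's count.
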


\begin{proof}
Let $S$ be a numerical semigroup with genus $g$.
By Lemma \ref{lemma1}, $\mathbb{N} \setminus K_S(F-\operatorname{F}(S)) = \{1, \ldots, F\} \setminus \{F - a\ \mid\ a \in \mathbb{N} \setminus S\}$. Moreover, since $\operatorname{F}(S) \leq  2g-1$ (see, for instance \cite[Lemma 2.14]{ns-book}), then $F > 4g-1 > 2\operatorname{F}(S)$. So, by Proposition \ref{Prop main}, our correspondence is a well defined application between the set of numerical semigroups with genus $g$ and the set of almost symmetric numerical semigroups with Frobenius number $F$ and type $F -2g$. Moreover, since $K_{S_1}(F-\operatorname{F}(S_1)) = K_{S_2}(F-\operatorname{F}(S_2))$ if and only if $S_1 = S_2$, we have that our application is clearly injective.

In order to see that it is surjective, let us consider an almost symmetric numerical semigroup $S'$ with Frobenius number $F$ and type $F -2g$, and set $G' := \{x \in \mathbb{N} \setminus S'\ \mid F-x \in S'\}$. We prove that $G'$ is a gapset. Take $a$ and $b \in \mathbb{N} \setminus \{0\}$ such that $a + b \in G',$ that is, $a+b \not\in S'$ and $F-(a+b) \in S'$, and let us prove that $a \in G'$ or $b \in G'$. To this end, we distinguish two cases:
\begin{itemize}
\item If $a \in S'$ or $b \in S'$. If  $a \in S'$, then $F-b = (F-(a+b))+a \in S'$ and $b \not\in S';$ otherwise $F = (F-b)+b \in S'$. Thus, $b \in G'$. By arguing analogously, if $b \in S'$, we obtain $a \in G'.$
\item If $a$ and $b \not\in S'$; in particular, we have that either $F-b \in S'$ or $b \in \mathrm{PF}(S')$, because $S'$ is almost symmetric. In the first case, $b \in G'$ and, in the second case, $F-a = b+(F-(a+b)) \in S,$ implies that $a \in G'$.
\end{itemize}
Thus, we have that $T=\mathbb{N}\setminus G'$ is a numerical semigroup. 
Since $S'$ is almost symmetric, by \eqref{eq:gft}, $\operatorname{g}(S')=(F+F-2g)/2=F-g$, and from the definition of almost symmetric numerical semigroup, we have that $\operatorname{g}(T)=\#G' = \operatorname{g}(S') - \#\mathrm{PF}(S') = (F-g)-(F-2g) = g$. Also  \[ K_T(F-\operatorname{F}(T)) = \{F-a\ \mid a \in G'\} \cup \{F+1, \ldots \}.\] 
From the definition of $G'$, $K_T(F-\operatorname{F}(T))\subseteq S'$. For the other inclussion, take $s\in S'$, with $s<F$. Then $F-s\in G'$, and $s=F-(F-s)\in K_T(F-\operatorname{F}(T))$.
\end{proof}

The inverse map in Theorem \ref{Th main} can be also explicitly described as we see next. If $S$ is a numerical semigroup, we will write $S^* = S \cup \mathrm{PF}(S)$. It easy to see that $S^*$ is a relative ideal of $S$. The ideal $S^*$ is called the \emph{dual of $S\setminus\{0\}$ with respect to $S$}. This term is justified by the fact that $S^*$ is equal to $\{z \in \mathbb{Z}\mid z + S \setminus \{0\} \subseteq S\}$.

Let us see that this star operation is the inverse of our application in Theorem \ref{Th main}.

\begin{proposition}\label{reverse}
Let $S$ be a numerical semigroup with genus $g$. If $F$ is a positive integer greater than $2 \operatorname{F}(S)$, then 
\[K_S(F-\operatorname{F}(S))^* = S.\]
\end{proposition}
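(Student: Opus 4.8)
The plan is to show the two inclusions $K_S(F-\operatorname{F}(S))^* \subseteq S$ and $S \subseteq K_S(F-\operatorname{F}(S))^*$ directly, by unwinding the definitions of the star operation and of the shifted canonical ideal. Write $f = \operatorname{F}(S)$ and $I = K_S(F-f)$. The easy inclusion is $S \subseteq I^*$: by Lemma~\ref{lemma1} together with the description of $G$ in Proposition~\ref{Prop main} (whose hypothesis $F > 2f$ is exactly what we are assuming here), we know $\mathbb{N}\setminus I = G = \{1,\dots,F-f-1\}\cup\{F-a \mid a \in S\cap\{0,\dots,f\}\}$, so in particular $I$ already contains every integer $\ge F-f$ and also contains $0$ (since $0 = F - F$ and $F \in \mathbb{N}\setminus S$). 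Then for $s \in S$ one checks $s + (I\setminus\{0\})\subseteq S$; concretely, an element of $I\setminus\{0\}$ has the form $F-z$ with $z\in\mathbb Z\setminus S$, and $s+(F-z) = F - (z-s)$, which lies in $S'=\mathbb N\setminus G$ because $z - s \notin S$ would force $F-(z-s)\notin S'$, but one shows $z - s$ is indeed not in $S$ using that $z\notin S$ and $S$ is a semigroup — wait, that is the wrong direction, so instead I argue via Proposition~\ref{Prop main}'s explicit computation: $I^* = S' \cup \operatorname{PF}(S')$, and comparing with the formula for $\operatorname{PF}(S')$ there shows $S \cap\{0,\dots,f\}\subseteq \operatorname{PF}(S')\cup\{0\}$ while $S\cap\{f+1,\dots\}\subseteq S'$.

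The reverse inclusion $I^*\subseteq S$ is the one requiring care, and it is where I expect the main obstacle to lie. By the characterization $I^* = \{z\in\mathbb Z \mid z + (I\setminus\{0\})\subseteq I\}$ — valid because $I^* $ is the dual of $I\setminus\{0\}$ with respect to $I$, noting $I$ itself is a numerical semigroup by Lemma~\ref{lemma1} and Proposition~\ref{Prop main} — an element $z\in I^*$ must satisfy $z + g \in I$ for all nonzero $g\in I$. The plan is to suppose $z\notin S$ and derive a contradiction. Since $z\notin S$ and $z \ge 0$ (as $z\in I^*\supseteq I \ni 0$, and one checks $I^*\subseteq\mathbb N$ because $-1 + (\text{something in } I) $ leaves $I$), we have $z$ is a gap of $S$ with $z\le f$; then $F - z \in \{F-f,\dots,F\}$, and since $F-z$ is of the form $F - (\text{gap of }S)$ it does \emph{not} lie in $G$, i.e. $F-z \in I\setminus\{0\}$. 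I would then consider $z + (F-z) = F$: but $F\notin I$ since $F = \operatorname{F}(\mathbb N\setminus G)$ — wait, $F$ could be in $I$ or not; actually $F \in G$, so $F\notin I$. Hence $z + (F-z)\notin I$ while $F - z\in I\setminus\{0\}$, contradicting $z\in I^*$.

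So the argument closes once we pin down the two facts used as lemmas-in-passing: (i) $0\in I$, which holds because $0 = F - F$ and $F$ is a gap of $S$, so $F - F$ is removed from $\{1,\dots,F\}$'s complement-pattern appropriately — more simply, $0\in I$ because $0\notin\{1,\dots,F\}$ and $0\notin\{F-a\mid a\in\mathbb N\setminus S\}$ would require $F = a$ with $a$ a gap, which is true, so I should instead note $0\in I$ directly from $I = K_S(F-f) = \{F-z\mid z\in\mathbb Z\setminus S\}\cup\{F+1,\dots\}$ taking $z = F$; and (ii) $F\notin I$, which likewise follows since $F = F - 0$ and $0\in S$, so $F$ is not of the form $F - z$ with $z\notin S$, and $F < F+1$. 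With these in hand the contradiction in the previous paragraph is complete, giving $I^*\subseteq S$, and combined with $S\subseteq I^*$ we conclude $K_S(F-\operatorname{F}(S))^* = S$. The only subtlety to watch is making sure the hypothesis $F > 2f$ (rather than merely $F > f$) is genuinely invoked — it enters through Proposition~\ref{Prop main} to guarantee $\mathbb N\setminus G$ is a semigroup with the stated $\operatorname{PF}$-set, which is what licenses the clean description of $I^* = S'\cup\operatorname{PF}(S')$ used in the first inclusion.
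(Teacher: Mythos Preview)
Your argument is essentially correct and reaches the same conclusion, but by a genuinely different route than the paper. The paper proves only the inclusion $I^*\subseteq S$ (observing $I\subseteq S$ since every element of $I$ exceeds $f$, and $\operatorname{PF}(I)\subseteq S$ from the explicit formula in Proposition~\ref{Prop main}), and then finishes by a \emph{cardinality count}: $\#(\mathbb N\setminus I^*)=\operatorname g(I)-\operatorname t(I)=(F-g)-(F-2g)=g=\operatorname g(S)$, forcing equality. You instead prove both inclusions by element-chasing. Your argument for $I^*\subseteq S$ is particularly clean and more elementary than the paper's: given $z\in I^*\setminus S$, the gap $z$ yields $F-z\in I\setminus\{0\}$, yet $z+(F-z)=F\notin I$, contradicting the dual characterisation of $I^*$. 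This step does not even require the full $\operatorname{PF}$ description from Proposition~\ref{Prop main}; only $F>f$ is used. The paper's counting approach, on the other hand, is shorter once Proposition~\ref{Prop main} is in hand and makes the role of the invariants $(\operatorname g,\operatorname t)$ transparent.

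Two small corrections to your write-up. First, in the inclusion $S\subseteq I^*$ your split ``$S\cap\{f+1,\dots\}\subseteq S'$'' is not literally true: integers $s\in S$ with $f<s\le F-f-1$ lie in $G$, hence not in $S'$, but they do lie in $\operatorname{PF}(S')$ by the middle block of the formula in Proposition~\ref{Prop main}; so the conclusion $S\subseteq S'\cup\operatorname{PF}(S')=I^*$ still holds. Second, your justification of $0\in I$ via ``$z=F$'' fails, since $F>\operatorname F(S)$ forces $F\in S$; the correct reason is simply that $G\subseteq\{1,\dots,F\}$ by construction, so $0\notin G$ (this is the paper's implicit convention in Lemma~\ref{lemma1}). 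Neither slip affects your main line.
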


\begin{proof}
By definition and the condition $F>\operatorname{F}(S)$, $K_S(F-\operatorname{F}(S)) \subseteq S$. By Proposition \ref{Prop main} and the proof of Theorem \ref{Th main}, and using once more that $F>\operatorname{F}(S)$, we have that 
$\mathrm{PF}(K_S(F-\operatorname{F}(S))) \subseteq S$. Thus, $K_S(F-\operatorname{F}(S))^* \subseteq S$. Now, $\#(\mathbb{N}\setminus K_S(F-\operatorname{F}(S))^*)=\operatorname{g}(K_S(F-\operatorname{F}(S)))-\operatorname{t}(K_S(F-\operatorname{F}(S)))$, and according to the proof of Theorem \ref{Th main}, this amount equals  $F-\operatorname{g}(S)-(F-2\operatorname{g}(S))=\operatorname{g}(S)$. We conclude that $K_S(F-\operatorname{F}(S))^* = S$. 
\end{proof}

This correspondence provides a new characterization of almost symmetric numerical semigroups with high type.

\begin{corollary}\label{cor:char-almos-ht}
 Let $T$ be a numerical semigroup with Frobenius number $F$ and type $t$, with $t\ge (F-1)/2$ and $F-t$ even. Then $T$ is almost symmetric if and only if $T^*$ is a numerical semigroup with genus $(F-t)/2$.
\end{corollary}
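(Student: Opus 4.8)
The plan is to deduce both implications from the bijection of Theorem~\ref{Th main} and Proposition~\ref{reverse}, together with the elementary identity that relates the genus of $T^*$ to that of $T$. Throughout I would write $g := (F-t)/2$; this is a nonnegative integer, since $F-t$ is even and $t\le F$ (every pseudo-Frobenius number is a gap, so $\mathrm{PF}(T)\subseteq\{1,\ldots,F\}$). The first observation to record is that the hypothesis $t\ge (F-1)/2$ is exactly the hypothesis $F\ge 4g-1$ of Theorem~\ref{Th main}: since $t=F-2g$, we have $t\ge (F-1)/2 \Leftrightarrow 2(F-2g)\ge F-1 \Leftrightarrow F\ge 4g-1$.

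For the backward implication I would not even need $t\ge (F-1)/2$, only that $F-t$ is even. Because $\mathrm{PF}(T)\subseteq\mathbb N\setminus T$ and $T^*=T\cup\mathrm{PF}(T)$, one always has $\#(\mathbb N\setminus T^*)=\operatorname{g}(T)-\operatorname{t}(T)=\operatorname{g}(T)-t$. Hence, if $T^*$ is a numerical semigroup with genus $(F-t)/2=g$, then $g=\operatorname{g}(T)-t$, so $\operatorname{g}(T)=g+t=(F+t)/2$, and $T$ is almost symmetric by \eqref{eq:gft}.

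For the forward implication I would assume $T$ almost symmetric. The degenerate case $g=0$ can be dispatched directly: then $\operatorname{g}(T)=(F+t)/2=F$, which forces $T=\{0\}\cup\{F+1,F+2,\ldots\}$ and $T^*=\mathbb N$, a numerical semigroup of genus $0$; so we may assume $g\ge 1$. Now $T$ is an almost symmetric numerical semigroup with Frobenius number $F$ and type $t=F-2g$, and $F\ge 4g-1$, so Theorem~\ref{Th main} yields a numerical semigroup $S$ of genus $g$ with $T=K_S(F-\operatorname{F}(S))$. Since $S$ has genus $g$, the bound $\operatorname{F}(S)\le 2g-1$ gives $2\operatorname{F}(S)\le 4g-2<4g-1\le F$, so $F>2\operatorname{F}(S)$ and Proposition~\ref{reverse} applies, giving $T^*=K_S(F-\operatorname{F}(S))^*=S$. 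Thus $T^*$ is a numerical semigroup, and its genus is $g=(F-t)/2$.

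I do not expect a genuine obstacle here: the whole weight of the forward direction is carried by Theorem~\ref{Th main} and Proposition~\ref{reverse}, so the only things to check are that the triple $(T,F,t)$ satisfies their hypotheses --- which is precisely the equivalence $t\ge(F-1)/2\Leftrightarrow F\ge 4g-1$ together with the standard bound $\operatorname{F}(S)\le 2g-1$. The single point deserving a moment of care is the genus bookkeeping in the backward direction: one must note that $\#(\mathbb N\setminus T^*)=\operatorname{g}(T)-\operatorname{t}(T)$ holds for every numerical semigroup $T$, regardless of whether $T^*$ happens to be a numerical semigroup, so that the hypothesis on the genus of $T^*$ directly converts into the equality \eqref{eq:gft}.
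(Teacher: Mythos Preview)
Your proof is correct and follows essentially the same route as the paper's: the forward direction via Theorem~\ref{Th main} and Proposition~\ref{reverse}, and the backward direction via the genus identity $\#(\mathbb N\setminus T^*)=\operatorname{g}(T)-\operatorname{t}(T)$ combined with \eqref{eq:gft}. You are in fact slightly more careful than the paper in separating out the degenerate case $g=0$, which is needed since Theorem~\ref{Th main} is stated only for positive $g$.
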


\begin{proof}
\emph{Necessity}. If $T$ is almost symmetric, as $F-t$ is even, then $F-t=2g$ for some nonnegative integer $g$, and $t\ge (F-1)/2$ yields $F\ge 4g-1$. Thus, $T= K_S(F-\operatorname{F}(S))$ for some numerical semigroup $S$ of genus $g$ (Theorem \ref{Th main}). Notice that $2g-1\ge \operatorname{F}(S)$, whence $2\operatorname{F}(S)\le 4g-2< F$. By Proposition \ref{reverse}, we conclude that $T^*=S$, and we are done.

\noindent\emph{Sufficiency}. Let $S=T^*$, and set $g=(F-t)/2$, the genus of $S$.
As $S=T^*=T\cup\operatorname{PF}(T)$, we have $g=\operatorname{g}(T)-t$, and thus 
\[\operatorname{g}(T)=\frac{F-t}2+t=\frac{\operatorname{F}(T)+\operatorname{t}(T)}2,\] proving that $T$ is almost symmetric.
\end{proof} 

The depth of a numerical semigroup has shown to play an special role in the study of Wilf's conjecture (see for instance \cite{manuel-survey, EF}). Let $S$ be a numerical semigroup with Frobenius number $F$ and multiplicity $m$, and write $F+1=qm-r$ for some integers $q$ and $r$ with $0\le r <m$. Then its \emph{depth} is $\operatorname{depth}(S)=q$.

\begin{corollary}
Let $S$ be a numerical semigroup with genus $g$ and let $F$ be a positive integer greater than $2 \operatorname{F}(S)$. The semigroup $K_S(F-\operatorname{F}(S))$ has depth equal to two.
\end{corollary}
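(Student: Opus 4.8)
The plan is to read the depth straight off the two invariants of $K_S(F-\operatorname F(S))$ that are already recorded in Proposition \ref{Prop main}. First I would put $f=\operatorname F(S)$ and observe that $f\ge 1$, since $S$ has positive genus. By Lemma \ref{lemma1} and Proposition \ref{Prop main}, the numerical semigroup $T:=K_S(F-\operatorname F(S))$ has Frobenius number $\operatorname F(T)=F$ and multiplicity $m:=\operatorname m(T)=F-f$; in particular $m\ge f+1\ge 2$, using $F>2f$, so the division $F+1=q\,m-r$ with integers $q,r$ and $0\le r<m$ makes sense. Since by definition $\operatorname{depth}(T)=q$, the whole statement reduces to showing $q=2$.

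Next I would rewrite the relation defining $q$ as the equivalent (and uniquely solvable) double inequality $(q-1)m<F+1\le q\,m$, so that it is enough to establish the two bounds $m<F+1$ and $F+1\le 2m$. The first is trivial, as $F+1-m=f+1>0$. The second is equivalent to $F+1\le 2(F-f)$, i.e.\ to $F\ge 2f+1$, which is exactly the hypothesis $F>2\operatorname F(S)$. From $m<F+1$ we get $q\,m=F+1+r\ge F+1>m$, hence $q\ge 2$; from $(q-1)m<F+1\le 2m$ we get $q-1<2$, hence $q\le 2$. Therefore $\operatorname{depth}(T)=q=2$, as claimed.

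I do not anticipate a genuine obstacle here: once the multiplicity $F-\operatorname F(S)$ is supplied by Proposition \ref{Prop main}, the argument is a one-line comparison of $F+1$ against $F-\operatorname F(S)$ and $2(F-\operatorname F(S))$. The one point that deserves a moment's attention is the degenerate case $g=0$, $S=\mathbb N$, where the multiplicity of $K_S(F+1)$ equals $F+1$ and the depth drops to $1$; this is why the positivity of the genus (in force throughout, as in Theorem \ref{Th main}) cannot be dropped.
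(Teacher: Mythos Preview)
Your proof is correct and follows essentially the same route as the paper: both arguments read off the multiplicity $m=F-\operatorname{F}(S)$ from Proposition~\ref{Prop main} (via Lemma~\ref{lemma1}) and then check that the division $F+1=qm-r$ forces $q=2$. The paper simply exhibits the division $F+1=2(F-\operatorname{F}(S))-(F-2\operatorname{F}(S)-1)$ directly (there is a typo in the printed remainder), whereas you verify the equivalent inequalities $m<F+1\le 2m$; your remark on the degenerate case $g=0$ is a fair caveat but does not affect the argument under the paper's standing assumptions.
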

\begin{proof}
By Lemma \ref{lemma1}, the multiplicity of $K_S(F-\operatorname{F}(S))$ is equal to $F-\operatorname{F}(S)$. Write $F+1=2(F-\operatorname{F}(S))-(F-\operatorname{F}(S)-1)$. Then $\operatorname{depth}(S)=2$.
\end{proof}

Depth equal to two has a particular relevance, since Bras' conjecture holds in the restricted class of numerical semigroups having this depth \cite{EF}.

\section{The algorithm}

Write $\mathscr{A}(F,t)$ for the set of almost symmetric numerical semigroups with Frobenius number $F$ and type $t$, and let $n_g$ be the number of numerical semigroups with genus $g$. As an immediate consequence of Theorem \ref{Th main} we obtain the following result.

\begin{corollary}\label{Cor IMNS18}
Let $g\in \mathbb{N}$, and let $F$ be a an integer greater than or equal to $4g-1$. Then number of numerical semigroups with genus $g$ is equal to the 
the number of almost symmetric numerical semigroups with Frobenius number $F$ and type $F-2g$. That is, \[F \geq 4g-1  \hbox{ implies } \# \mathscr{A}(F,F-2g) = n_g.\]
\end{corollary}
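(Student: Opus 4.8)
The plan is to read this off directly from Theorem \ref{Th main}. That theorem produces, for every pair $(F,g)$ with $F \geq 4g-1$, an explicit bijection
\[
S \longmapsto K_S(F-\operatorname{F}(S))
\]
from the set of numerical semigroups of genus $g$ onto the set $\mathscr{A}(F,F-2g)$ of almost symmetric numerical semigroups with Frobenius number $F$ and type $F-2g$. Since any bijection between finite sets equates their cardinalities, the corollary amounts to matching each of these two cardinalities with the symbol used for it in the statement.

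The first step I would take is to record that both sets in question are finite. A numerical semigroup of genus $g$ has Frobenius number at most $2g-1$ (cf. \cite[Lemma 2.14]{ns-book}), so it is completely determined by its gap set, a subset of $\{1,\ldots,2g-1\}$; hence there are only finitely many of them, and their number is $n_g$ by definition. Likewise, a numerical semigroup with Frobenius number $F$ has gap set contained in $\{1,\ldots,F\}$, so $\mathscr{A}(F,F-2g)$ is finite, with cardinality $\#\mathscr{A}(F,F-2g)$ by definition. The second step is then simply to invoke the bijection of Theorem \ref{Th main} to conclude $\#\mathscr{A}(F,F-2g) = n_g$ whenever $F \geq 4g-1$.

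I do not anticipate a genuine obstacle here: everything substantive has already been done in the proof of Theorem \ref{Th main} — well-definedness of the map (via Lemma \ref{lemma1}, Proposition \ref{Prop main}, and the bound $\operatorname{F}(S)\le 2g-1$), injectivity (immediate from the definition of the shifted canonical ideal), and surjectivity (the gapset construction). The only point worth a brief remark is the degenerate case $g=0$, where ``numerical semigroups of genus $0$'' means just $\mathbb{N}$ and $n_0=1$; the bijection still applies since $F\ge 4g-1=-1$ holds automatically for positive $F$, so no separate argument is needed. In short, the corollary is essentially Theorem \ref{Th main} restated at the level of counting functions.
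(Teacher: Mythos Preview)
Your proposal is correct and matches the paper's approach exactly: the paper presents this corollary with no proof, merely calling it ``an immediate consequence of Theorem~\ref{Th main}'', which is precisely what you do. Your additional remarks on finiteness and the $g=0$ edge case are harmless elaborations beyond what the paper bothers to state.
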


With this corollary we can restate the weaker version of the conjecture appearing in \cite{bras},  that is, that the sequence $n_g$ is increasing. Notice that in order to prove that $n_{g+1}>n_g$, one needs to show that 
\[\# \mathscr{A}(F,F-2(g+1))> \# \mathscr{A}(F,F-2g),\]
for $F$ large enough. This opens a new perspective to attack this conjecture.

We recall that the largest $n_g$ known so far appears in 

\centerline{\url{https://github.com/hivert/NumericMonoid}.}

\medskip
As we mentioned in the introduction, almost symmetric numerical semigroups with high type are uniquely determined by their sets of pseudo-Frobenius numbers.

\begin{corollary}\label{Cor UniPF}
Let $F \in \mathbb{N}$ and let $t \geq (F-1)/2$. Every almost symmetric numerical semigroup with Frobenius number $F$ and type $t$ is uniquely determined by its pseudo-Frobenius numbers.
\end{corollary}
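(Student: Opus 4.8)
The plan is to reduce the statement to the bijection already established in Theorem \ref{Th main} together with the reconstruction result in Proposition \ref{reverse}. Let $T$ be an almost symmetric numerical semigroup with Frobenius number $F$ and type $t\ge (F-1)/2$. First I would observe that since $T$ is almost symmetric, equation \eqref{eq:gft} forces $F+t=2\operatorname{g}(T)$ to be even, so $F-t=2g$ for a nonnegative integer $g$, and the hypothesis $t\ge (F-1)/2$ is exactly $F\ge 4g-1$. By Theorem \ref{Th main} there is a unique numerical semigroup $S$ of genus $g$ with $T=K_S(F-\operatorname{F}(S))$, and by Proposition \ref{reverse} (whose hypothesis $F>2\operatorname{F}(S)$ holds because $\operatorname{F}(S)\le 2g-1$, as noted in the proof of Corollary \ref{cor:char-almos-ht}) we recover $S=T^*=T\cup\operatorname{PF}(T)$.

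The key point is then that $T$ is determined by $\operatorname{PF}(T)$ alone. From $\operatorname{PF}(T)$ one reads off $F=\operatorname{F}(T)=\max\operatorname{PF}(T)$ and $t=\#\operatorname{PF}(T)$, hence $g=(F-t)/2$. Now $S=T\cup\operatorname{PF}(T)$, but I want to exhibit $S$ purely in terms of $\operatorname{PF}(T)$, not in terms of $T$. Using the explicit description of $\operatorname{PF}(\mathbb N\setminus G)$ in Proposition \ref{Prop main} (with $f=\operatorname{F}(S)$), the pseudo-Frobenius set of $T$ consists of three blocks: $\{a\in S\mid 0<a\le f\}$, the full interval $\{f+1,\dots,F-f-1\}$, and $\{F-a\mid a\in S\cap\{0,\dots,f\}\}$. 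The largest element of the last block is $F$, and $F-f$ is the smallest element of that block that exceeds $F-f-1$; so from $\operatorname{PF}(T)$ one can detect $f$ as follows: $f+1$ through $F-f-1$ is the maximal run of consecutive integers in $\operatorname{PF}(T)$ ending just below the ``reflected'' block, equivalently $f=\min(S\setminus\{0\})-1$ where $S\setminus\{0\}$'s small part is the first block of $\operatorname{PF}(T)$. Concretely, the elements of $\operatorname{PF}(T)$ that are $\le f$ together with $0$ and all integers $\ge f+1$ that are not gaps reconstruct $S$; but the cleanest route is simply $S=T^*$ and then note $T=S\setminus\operatorname{PF}(T)=(T^*)\setminus\operatorname{PF}(T)$, so once we know $T^*$ and $\operatorname{PF}(T)$ we know $T$, and $T^*=S$ is itself determined by Theorem \ref{Th main} from the data $(F,g)$ being insufficient — so we really do need $\operatorname{PF}(T)$ to pin down which $S$.

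Putting this together: the map $T\mapsto\operatorname{PF}(T)$ is injective on $\mathscr{A}(F,F-2g)$ because if $\operatorname{PF}(T_1)=\operatorname{PF}(T_2)$ then $T_1^*=T_1\cup\operatorname{PF}(T_1)=T_2\cup\operatorname{PF}(T_2)=T_2^*$, and $T_i=T_i^*\setminus\operatorname{PF}(T_i)$, hence $T_1=T_2$. The only subtlety is verifying $T=T^*\setminus\operatorname{PF}(T)$, i.e.\ that $T$ and $\operatorname{PF}(T)$ are disjoint and their union is $T^*$; this is immediate from $\operatorname{PF}(T)\subseteq\mathbb N\setminus T$ and the definition $T^*=T\cup\operatorname{PF}(T)$.

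I expect the main obstacle to be purely expository rather than mathematical: making precise that ``uniquely determined by its pseudo-Frobenius numbers'' means injectivity of $T\mapsto\operatorname{PF}(T)$ on the relevant family, and checking that all the hypotheses ($F\ge 4g-1$, $F>2\operatorname{F}(T^*)$, $F-t$ even) are automatically satisfied so that Theorem \ref{Th main} and Proposition \ref{reverse} apply. Once the translation $t\ge(F-1)/2 \Leftrightarrow F\ge 4g-1$ is made, the corollary is essentially a restatement of the fact that $T\mapsto T^*=S$ followed by $S\mapsto K_S(F-\operatorname{F}(S))=T$ are mutually inverse, combined with $T=T^*\setminus\operatorname{PF}(T)$.
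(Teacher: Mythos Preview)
Your final ``putting this together'' step is circular. You write
\[
T_1^* = T_1\cup\operatorname{PF}(T_1) = T_2\cup\operatorname{PF}(T_2) = T_2^*,
\]
but the middle equality uses $T_1=T_2$, which is precisely what you are trying to prove. Knowing only that $\operatorname{PF}(T_1)=\operatorname{PF}(T_2)=:P$ gives $T_1^*=T_1\cup P$ and $T_2^*=T_2\cup P$; since $P$ is disjoint from each $T_i$, these unions coincide if and only if $T_1=T_2$. So the route ``$\operatorname{PF}(T)$ determines $T^*$, and $T=T^*\setminus\operatorname{PF}(T)$'' collapses: you cannot compute $T^*$ from $\operatorname{PF}(T)$ via the formula $T^*=T\cup\operatorname{PF}(T)$ without already knowing $T$. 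You noticed this yourself mid-paragraph (``$(F,g)$ being insufficient --- so we really do need $\operatorname{PF}(T)$ to pin down which $S$''), but then reverted to the circular version in the summary.

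The fix is exactly the idea you sketched but did not carry out: use the explicit three-block description of $\operatorname{PF}(T)$ from Proposition~\ref{Prop main} to reconstruct the genus-$g$ semigroup $S$ directly from $\operatorname{PF}(T)$, with no reference to $T$. This is what the paper does. Writing $S_i$ for the preimage of $S_i'=T_i$ under the bijection of Theorem~\ref{Th main} and assuming $\operatorname{F}(S_1)\le\operatorname{F}(S_2)$, the equality $\operatorname{PF}(S_1')=\operatorname{PF}(S_2')$ together with Proposition~\ref{Prop main} forces $\{a\in S_1:0<a\le\operatorname{F}(S_1)\}\cup\{\operatorname{F}(S_1)+1,\dots,\operatorname{F}(S_2)\}\subseteq\{a\in S_2:0<a\le\operatorname{F}(S_2)\}$, hence $S_1\subseteq S_2$; equal genus then gives $S_1=S_2$ and so $T_1=T_2$. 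Your block-reading discussion was heading in this direction; it just needs to be made into the actual argument rather than abandoned for the shortcut through $T^*$.
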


\begin{proof}
Let $S'_1$ and $S'_2$ be two almost symmetric numerical semigroups with Frobenius number $F$ and type $t$ such that $\mathrm{PF}(S'_1) = \mathrm{PF}(S'_2)$. By \eqref{eq:gft}, the genus of $S_1'$ and $S_2'$, equals $(F+t)/2$; whence $F-t=F+t-2t$ is even. Set $2g=F-t$. Then $2F\ge 4g+2t\ge 4g+F-1$, which implies $F \geq 4g-1$. So, by Theorem \ref{Th main}, there exist unique numerical semigroups $S_1$ and $S_2$ of genus $g$ such that $S'_i = K_{S_i}(F-\operatorname{F}(S_i))$, $i \in\{ 1,2\}$. Observe that $F \geq 4g-1 \geq \operatorname{F}(S_1)+\operatorname{F}(S_2)+1$, because as we already mentioned above $F(S_i) \leq 2g-1$, $i\in\{ 1,2\}$ (see, for instance \cite[Lemma 2.14]{ns-book}). Moreover, without loss of generality we may suppose that $\mathrm{F}(S_1) \leq \mathrm{F}(S_2)$.  Thus, by the last part of Proposition \ref{Prop main}, we have that $\{a \in S_1 \mid 0 < a \leq \operatorname{F}(S_1)\} \cup \{\operatorname{F}(S_1) + 1, \ldots, \operatorname{F}(S_2)\} \subseteq \{a \in S_2 \mid 0 < a \leq \operatorname{F}(S_2)\}$, which implies $S_1 \subseteq S_2$ or equivalently, $\mathbb{N} \setminus S_2 \subseteq \mathbb{N} \setminus S_1$. Now, since both $\mathbb{N} \setminus S_2$ and $\mathbb{N} \setminus S_1$ have cardinality $g$, we conclude that $S_1 = S_2$, and consequently $S'_1 = S'_2$.
\end{proof}

Notice that in general a potential set of pseudo-Frobenius numbers does not uniquely determine a numerical semigroup, \cite{pseudo}, even under the almost symmetric condition. It may happen that several numerical semigroups share the same set of pseudo-Frobenius numbers. Thus the above result opens a new strategy to determine almost symmetric numerical semigroups with large type with respect to the Frobenius number.

\begin{example}
There are $103$ almost symmetric numerical semigroups with Frobenius number $20$, while when computing their sets of psuedo-Frobenius numbers, we only get $62$ different possible sets.
{\small
\begin{verbatim}
 gap> l:=AlmostSymmetricNumericalSemigroupsWithFrobeniusNumber(20);;
 gap> Length(l);
 103
 gap> Length(Set(l,PseudoFrobenius));
 62    
\end{verbatim}
}
\end{example}

In \cite[Theorem 4.1]{as-ft} it is show that each almost symmetric numerical semigroup of Frobenius $F$ and type $t$ is obtained from an almost symmetric numerical semigroup of Frobenius $F$ and type $t+2$. More precisely, the following is proved.

\begin{theorem}\label{16}
Let $F \geq 5$ and $t+2 \leq F$ be a positive integer greater than $2$ such that $F+t$ is even. Then $S'$ is an almost symmetric numerical semigroup with Frobenius number $F$ and type $t$ if and only if there exist an almost symmetric numerical semigroup $S$ with Frobenius number $F$ and type $t+2$, and $i \in \{t+1, \ldots, \operatorname{m}(S)-1\}$ such that 
\begin{enumerate}[(a)]
\item $S' = \{i\} \cup S$,
\item \label{item:b} $-i + \mathbb{N} \setminus S' \subseteq \mathbb{Z} \setminus S'$, and
\item \label{item:c} $\big(i + (\mathrm{PF}(S) \setminus \{i,F-i\}) \big) \subseteq S'$.
\end{enumerate}
In this case, $\mathrm{PF}(S) = \mathrm{PF}(S') \cup \{i,F-i\}.$
\end{theorem}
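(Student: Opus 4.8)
\emph{Plan.} The statement is an equivalence, and in both directions the engine is the same counting device: by the characterization \eqref{eq:gft} of almost symmetry together with the universal inequality $\operatorname{g}(T)\ge(\operatorname{F}(T)+\operatorname{t}(T))/2$ \cite{nari}, a numerical semigroup $T$ with $\operatorname{F}(T)=F$ is almost symmetric of type $\tau$ \emph{as soon as} $\operatorname{g}(T)=(F+\tau)/2$ and $T$ has at least $\tau$ pseudo-Frobenius numbers. So in each implication I would produce the candidate semigroup, verify it is a numerical semigroup with Frobenius number $F$, compute its genus by a one-element count, and then exhibit the required pseudo-Frobenius numbers; the equality of pseudo-Frobenius sets claimed at the end then drops out by comparing cardinalities.

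\emph{Sufficiency.} Let $S$ be almost symmetric with $\operatorname{F}(S)=F$, $\operatorname{t}(S)=t+2$, and let $i\in\{t+1,\dots,\operatorname{m}(S)-1\}$ satisfy (a)--(c); put $S'=\{i\}\cup S$. Since $i<\operatorname{m}(S)$ we have $i\notin S$, so $\operatorname{g}(S')=\operatorname{g}(S)-1$, which by \eqref{eq:gft} for $S$ equals $(F+t)/2$. That $S'$ is a submonoid is precisely what (b) provides: if $s\in S'\setminus\{0\}$ and $i+s\notin S'$, then $i+s\in\mathbb N\setminus S'$, so (b) gives $s=(i+s)-i\notin S'$, a contradiction; hence $i+S'\subseteq S'$, which with $S+S\subseteq S$ makes $S'$ a numerical semigroup, and one checks (using the numerical hypotheses on $F,t$ to exclude the degenerate $S=\{0,F+1,F+2,\dots\}$) that $i\ne F$, whence $\operatorname{F}(S')=F$. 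Next, $i,F-i\in\operatorname{PF}(S)$ and $i\ne F-i$: for $i$, if $i+s\notin S$ with $s\in S\setminus\{0\}$ then $i+s\in\mathbb N\setminus S'$ (as $i+s>i$) and (b) forces $s\notin S'$; applying (b) to the gap $F$ of $S'$ yields $F-i\notin S'$, hence $F-i\notin S$ and, since $i\in S'$, also $i\ne F-i$, while for $s\in S\setminus\{0\}$ one has $s\ge\operatorname{m}(S)>i$, so $(F-i)+s>F$ lies in $S$. Therefore $\operatorname{PF}(S)\setminus\{i,F-i\}$ has exactly $t$ elements, and each member $f$ lies in $\operatorname{PF}(S')$: indeed $f\notin S'$, $f+s\in S\subseteq S'$ for $s\in S\setminus\{0\}$, and $f+i\in S'$ by (c). Hence $\operatorname{t}(S')\ge t$, which with $\operatorname{g}(S')=(F+t)/2$ and the universal inequality forces $\operatorname{t}(S')=t$ and $S'$ almost symmetric; then $\operatorname{PF}(S')=\operatorname{PF}(S)\setminus\{i,F-i\}$ by cardinality, i.e.\ $\operatorname{PF}(S)=\operatorname{PF}(S')\cup\{i,F-i\}$.

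\emph{Necessity.} Conversely, let $S'$ be almost symmetric with $\operatorname{F}(S')=F$ and $\operatorname{t}(S')=t$; set $i:=\operatorname{m}(S')$ and $S:=S'\setminus\{i\}$. Since the multiplicity is a minimal generator, $S$ is a numerical semigroup with $\operatorname{m}(S)>i$, so (a) holds, $i\le\operatorname{m}(S)-1$, and $\operatorname{F}(S)=F$ (note $i<F$); moreover $i=\operatorname{m}(S')\ge\operatorname{t}(S')+1=t+1$ by the standard bound $\operatorname{t}\le\operatorname{m}-1$, so $i$ is in the prescribed range, and $\operatorname{g}(S)=\operatorname{g}(S')+1=(F+t+2)/2$. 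Condition (b) is automatic because $i\in S'$ (if $x\notin S'$ and $x-i\in S'$ then $x=i+(x-i)\in S'$). From $i\in S'\setminus\{0\}$ and $\operatorname{F}(S')=F$ we get $F-i\notin S'$, hence $F-i\notin S$ and $F-i\ne i$; as before, $i,F-i\in\operatorname{PF}(S)$ (for $s\in S\setminus\{0\}$, $i+s\in S'$ with $i+s>i$ gives $i+s\in S$, and $s>i$ gives $(F-i)+s>F\in S$), while $\operatorname{PF}(S')\subseteq\operatorname{PF}(S)$ (a pseudo-Frobenius number $f$ of $S'$ is not in $S'\supseteq S$, and $f+s\in S'$ with $f+s>i$ lies in $S$) and $i,F-i\notin\operatorname{PF}(S')$ ($i\in S'$; $(F-i)+i=F\notin S'$). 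Hence $\operatorname{PF}(S)\supseteq\operatorname{PF}(S')\cup\{i,F-i\}$, a set of $t+2$ elements, so $\operatorname{t}(S)\ge t+2$; genus forces equality, $S$ is almost symmetric of type $t+2$, and $\operatorname{PF}(S)=\operatorname{PF}(S')\cup\{i,F-i\}$. Finally, since $i,F-i\notin\operatorname{PF}(S')$, condition (c) reads $i+\operatorname{PF}(S')\subseteq S'$, which holds because $i\in S'\setminus\{0\}$ absorbs every pseudo-Frobenius number of $S'$.

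\emph{Main obstacle.} Both halves run on the same counting principle, so the real content is to \emph{discover} it: one has to see that the semigroup to be peeled off in the necessity direction is obtained by deleting exactly the multiplicity of $S'$, and, dually, that the somewhat opaque hypotheses (a)--(c) encode precisely the data needed to run this bookkeeping in reverse. The secondary delicate point is the treatment of the pair $\{i,F-i\}$: one must show it consists of two \emph{distinct} pseudo-Frobenius numbers of $S$ that are genuinely missing from $S'$, which is where condition (b) applied at the gap $F$, the bound $\operatorname{t}\le\operatorname{m}-1$, and a little care with the numerical hypotheses (to keep $0<i<F$, i.e.\ to avoid the degenerate $S=\{0,F+1,F+2,\dots\}$) all enter; everything else is routine membership chasing.
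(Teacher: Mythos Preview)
The paper does not actually prove Theorem~\ref{16}: it is quoted from \cite[Theorem~4.1]{as-ft} and is only \emph{used} here (in the proof of Proposition~\ref{Prop algo}), so there is no in-paper proof to compare against.

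Your argument is essentially correct and is the natural one. In each direction you adjust the semigroup by one element, read off the genus, exhibit at least the required number of pseudo-Frobenius elements, and then let the universal inequality $\operatorname{g}\ge(\operatorname{F}+\operatorname{t})/2$ together with \eqref{eq:gft} pin down the type and force almost symmetry simultaneously; the equality $\operatorname{PF}(S)=\operatorname{PF}(S')\cup\{i,F-i\}$ then follows by cardinality. The bound $i\ge t+1$ via $\operatorname{t}(S')\le\operatorname{m}(S')-1$ and the verification that $i$ and $F-i$ are distinct new pseudo-Frobenius numbers are handled cleanly.

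One small caveat: your parenthetical claim that the numerical hypotheses ``exclude the degenerate $S=\{0,F+1,F+2,\dots\}$'' is not literally true, since $t+2=F$ is permitted by the statement as written. In that boundary case $\operatorname{m}(S)=F+1$, the value $i=F$ lies in the allowed range and satisfies (a)--(c), yet $\operatorname{F}(S')=F-1$. This is a wrinkle in the theorem as quoted rather than a flaw in your method; it disappears once $i<F$, and it does not affect the necessity direction (which is the only direction the present paper actually invokes).
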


Let us see that if $t \geq (F-1)/2$,  conditions (\ref{item:b}) and (\ref{item:c}) in Theorem \ref{16} can be replaced by a simpler test.

\begin{proposition}\label{Prop algo}
Let $F \geq 5$ and let $t \in [(F-1)/2,F-2]$ be an integer such that $F+t$ is even. Then $S'$ is an almost symmetric numerical semigroup with Frobenius number $F$ and type $t$ if and only if there exist an almost symmetric numerical semigroup $S$ with Frobenius number $F$ and type $t+2$, and $i \in \{t+1, \ldots, \operatorname{m}(S)-1\}$ such that 
\begin{enumerate}[(a)]
\item $S' = \{i\} \cup S$, and
\item $(i+(\operatorname{PF}(S) \setminus \{i,F-i\})) \cap (\operatorname{PF}(S) \setminus \{i,F-i\}) = \varnothing$
\end{enumerate} 
In this case, $\mathrm{PF}(S') = \mathrm{PF}(S) \setminus \{i,F-i\}$ and $S'$ is the unique almost symmetric numerical semigroup with set of pseudo-Frobenius numbers equal to $\mathrm{PF}(S) \setminus \{i,F-i\}$.
\end{proposition}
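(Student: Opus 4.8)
The plan is to deduce the statement from Theorem~\ref{16}. Condition (a) and the admissible range for $i$ are identical in the two statements, so it is enough to show that, under the extra hypothesis $t\ge (F-1)/2$ and given (a) together with an admissible $i$, condition (b) of Theorem~\ref{16} holds automatically, while condition (c) of Theorem~\ref{16} is equivalent to condition (b) of the Proposition. The two ``in this case'' assertions will then follow from the last line of Theorem~\ref{16} (for the identity $\operatorname{PF}(S')=\operatorname{PF}(S)\setminus\{i,F-i\}$, after a cardinality count) and from Corollary~\ref{Cor UniPF} (for the uniqueness of $S'$).

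First I would record a few numerics. Put $2g:=F-t$; this is even since $F+t$ is, and $t\ge (F-1)/2$ is equivalent to $F\ge 4g-1$. Since $S$ is almost symmetric of type $t+2=F-2(g-1)$, Theorem~\ref{Th main} gives $S=K_{S_0}(F-\operatorname{F}(S_0))$ for a numerical semigroup $S_0$ of genus $g-1$, with $\operatorname{F}(S_0)\le 2g-3$ and $\operatorname{m}(S)=F-\operatorname{F}(S_0)$. From $i\in\{t+1,\dots,\operatorname{m}(S)-1\}$ we get $\operatorname{F}(S_0)+1\le t+1\le i\le \operatorname{m}(S)-1$ and $i\ge (F+1)/2>F/2$; hence, by Proposition~\ref{Prop main}, both $i$ and $F-i$ lie in the block $\{\operatorname{F}(S_0)+1,\dots,F-\operatorname{F}(S_0)-1\}\subseteq\operatorname{PF}(S)$, and moreover $i\ne F-i$, $F-2i<0$ and $2i>F$ (so $2i\in S$). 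In particular $i\in\operatorname{PF}(S)$, so $S'=\{i\}\cup S$ is a numerical semigroup with $\operatorname{F}(S')=F$ (the only configuration forcing $i=F$ is the boundary case $t=F-2$, where $S_0=\mathbb N$ and Proposition~\ref{Prop main} does not literally apply; I would handle it separately, e.g.\ via $\#\mathscr A(F,F-2)=n_1=1$ from Corollary~\ref{Cor IMNS18}).

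For condition (b) of Theorem~\ref{16}, write $\mathbb N\setminus S'=(\mathbb N\setminus S)\setminus\{i\}$; since $2i>F$ the value $2i$ is never a gap, and a short unwinding shows that $-i+(\mathbb N\setminus S')\subseteq\mathbb Z\setminus S'$ is equivalent to ``$i+s\in S$ for every $s\in S\setminus\{0\}$'', i.e.\ to $i\in\operatorname{PF}(S)$, which we already have. For condition (c) of Theorem~\ref{16}, set $P:=\operatorname{PF}(S)\setminus\{i,F-i\}$. Given $p\in P$ we have $i,p\in\operatorname{PF}(S)$, so using the identity $S^*=\{z\in\mathbb Z\mid z+(S\setminus\{0\})\subseteq S\}=S\cup\operatorname{PF}(S)$ one checks in one line that $i+p\in S^*$; thus either $i+p\in S$ or $i+p\in\operatorname{PF}(S)$. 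In the latter case $i+p\ne i$ (as $p\ne 0$) and $i+p\ne F-i$ (as $p>0>F-2i$), so $i+p\in P$ and therefore $i+p\notin S'$; in the former case $i+p\in S\subseteq S'$. Hence, for each $p\in P$, $i+p\in S'$ if and only if $i+p\notin P$, so $i+P\subseteq S'$ if and only if $(i+P)\cap P=\varnothing$ --- which is precisely the equivalence of condition (c) of Theorem~\ref{16} with condition (b) of the Proposition.

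Putting these steps together with Theorem~\ref{16} yields both implications of the stated ``if and only if''. For the final claims, Theorem~\ref{16} gives $\operatorname{PF}(S)=\operatorname{PF}(S')\cup\{i,F-i\}$; since $\operatorname{t}(S)=t+2$, $\operatorname{t}(S')=t$ and $i\ne F-i$, this union is disjoint, so $\operatorname{PF}(S')=\operatorname{PF}(S)\setminus\{i,F-i\}$. As $F=\max\operatorname{PF}(S)$ survives in this set and $\operatorname{t}(S')=t\ge (F-1)/2$, Corollary~\ref{Cor UniPF} gives that $S'$ is the unique almost symmetric numerical semigroup with that set of pseudo-Frobenius numbers. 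I expect the only subtle point to be the small computation showing $i+p$ cannot land on $i$ or on $F-i$: this is exactly where $t\ge (F-1)/2$ is used, via $i>F/2$ (equivalently $F-2i<0$). The rest --- that $i\in\operatorname{PF}(S)$ comes for free, and the cardinality bookkeeping --- is routine given the description of $\operatorname{PF}(S)$ in Proposition~\ref{Prop main}.
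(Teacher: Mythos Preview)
Your argument is correct and rests on the same two core observations as the paper's proof: that $i\in\operatorname{PF}(S)$ (obtained, as in the paper, by writing $S=K_T(F-\operatorname{F}(T))$ via Theorem~\ref{Th main} and reading off $\{t+1,\dots,\operatorname{m}(S)-1\}\subseteq\operatorname{PF}(S)$ from Proposition~\ref{Prop main}), and that for $p\in\operatorname{PF}(S)$ one has $i+p\in S\cup\operatorname{PF}(S)$ with $i+p\notin\{i,F-i\}$ (the last using $i>F/2$). The organization, however, differs. For sufficiency the paper does \emph{not} return to Theorem~\ref{16}: after establishing $i,F-i\in\operatorname{PF}(S)$ and $2i>F$ it proves directly that $PF':=\operatorname{PF}(S)\setminus\{i,F-i\}\subseteq\operatorname{PF}(S')$ (the contradiction step there is exactly your $i+p\in S^*$ computation) and then squeezes $\operatorname{t}(S')=t$ via Nari's inequality $\operatorname{g}(S')\ge(\operatorname{F}(S')+\operatorname{t}(S'))/2$ to conclude almost symmetry. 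Your route instead shows that condition~(b) of Theorem~\ref{16} is automatic (being equivalent to $i\in\operatorname{PF}(S)$) and that condition~(c) there is equivalent to the Proposition's condition~(b), and then invokes Theorem~\ref{16} for both implications. This buys a cleaner symmetry between the two directions and isolates precisely why the simpler test replaces (b)--(c); the paper's version trades that for a self-contained endgame that avoids re-quoting Theorem~\ref{16}. You are also right to flag the boundary case $t=F-2$ (genus-zero $S_0$, where $i=F$ is formally in range and Proposition~\ref{Prop main}/Theorem~\ref{Th main} do not literally apply); the paper's proof passes over this edge case without comment.
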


\begin{proof}
Necessity follows from Theorem \ref{16}. 

For the other implication, first observe that as $S$ is almost symmetric, $F-(t+2)$ is even. So, $g = (F-(t+2))/2$ is a nonnegative integer. Moreover, $F \geq 4g-1$, because $t+2 \geq t \geq (F-1)/2$. Thus, by Theorem \ref{Th main}, there exists a numerical semigroup $T$ of genus $g$ such that $K_T(F-\operatorname{F}(T)) = S$. Now, on the one hand, by Proposition \ref{Prop main}, we have that $\operatorname{m}(S) = F-\operatorname{F}(T)$ and, on the other hand, we have that, by \cite[Lemma 2.14]{ns-book}, $\operatorname{F}(T) \leq 2g - 1 \leq (F-1)/2 \leq t$. Therefore, $\{t+1, \ldots, \operatorname{m}(S)-1\} \subseteq \{\operatorname{F}(T)+1, \ldots, F-\operatorname{F}(T)-1\}$, in particular, $\{t+1, \ldots, \operatorname{m}(S)-1\} \subseteq \operatorname{PF}(S)$ in light of Proposition \ref{Prop main}. So, both $i$ and $F-i$ are pseudo-Frobenius numbers of $S$ and $\mathrm{PF}(S) \setminus \{i,F-i\}$ has cardinality $t$.

Consider now $S' = S \cup \{i\}$ and set $PF':=\mathrm{PF}(S) \setminus \{i,F-i\}$. Since $i \in \operatorname{PF}(S)$ and $2i \in S$ because $2i \geq 2t+2 \geq F$. We have that $S'$ is a numerical semigroup and $\operatorname{F}(S') = F$, because $i < F$. Moreover, $PF' + a \in S \subset S'$, for every $a \in S \setminus \{0\}$, because $PF' \subset \operatorname{PF}(S)$. Therefore, if $PF' + i \subset S'$, then we have $PF'\subseteq \operatorname{PF}(S')$.

Suppose $i+PF' \not\subset S'$. Then there exists $a \in PF' \subset \operatorname{PF}(S)$ such that $i+a \not\in S'$. Thus, $a+i \in \operatorname{PF}(S)$. Finally, as $i+a \neq i$, because $a \neq 0$, and $i+a \neq F-i$, because $2i > F$, we conclude that $i+a \in PF'$; in contradiction with condition (b).

Now we have $PF'\subseteq \operatorname{PF}(S')$, and we know that $\operatorname{g}(S')\ge (\operatorname{F}(S')+\operatorname{t}(S'))/2$ \cite[Proposition 2.2]{nari}. Consequently, $t\le \operatorname{t}(S')\le 2(\operatorname{g}(S)-1)-\operatorname{F}(S)=\operatorname{t}(S)-2=t$. Whence $t=\operatorname{t}(S')$ and by \eqref{eq:gft}, we deduce that $S'$ is almost symmetric. 

The uniqueness of $S'$ follows from Corollary \ref{Cor UniPF}.
\end{proof}

Let us see that the above results offer an alternative to compute $n_g$.

\begin{algorithm}
The following GAP \cite{gap} code counts the number of almost symmetric numerical semigroups of Frobenius number $F=4g-1$ and type $F-2j$ for each $j \in\{ 1, 2, \ldots, \lceil (F-1)/4 \rceil\}$; equivalently, by Corollary \ref{Cor IMNS18}, the number of numerical semigroups with genus $j,$ for each $j\in\{1, \ldots, g\}$. 
The main function is nothing but a recursive step that calls to the \texttt{auxiliar} function whose correctness relies in Proposition \ref{Prop algo}. We observe that since the pseudo-Frobenius numbers uniquely determine a numerical semigroup by Corolary \ref{Cor UniPF}, we only need to deal with pseudo-Frobenius sets. Moreover,  by Corolary \ref{Cor UniPF} again, in order to avoid unnecessary repetitions we can restrict the upper range of $i$ in Proposition \ref{Prop algo}, the mentioned restriction is forced with the second argument of the \texttt{auxiliar} function.

It is important to emphasize that our GAP code does not require to make calls to other libraries or GAP packages. This makes our method more versatile and suitable to be implemented in other programming languages.

{\small
\begin{verbatim}
 auxiliar := function(PF,m,t,s)
  local L,F,PF1,i,k,j;
   L := []; F:=PF[t+2];
   for i in [t+1 .. m-1] do
    PF1:=Difference(PF,[i,F-i]);
    k:=0;
    for j in [1 .. s] do
     if ((PF1[j]+i) in PF1) then
      k:=1;
      break;
     fi;
    od;
    if k=0 then 
     Append(L,[[PF1,i]]);
    fi;
   od;
   return L;
 end;
 
 counting_function := function(g)
  local F,L,j,M,t,s,N;
  F:=4*g-1;
  L:=[[[1 .. F] ,F]];
  for j in [1 .. g] do
   M:=[];
   t:=Length(L[1][1])-2; 
   s:=Int(t/2);
   for N in L do 
    Append(M,auxiliar(N[1],N[2],t,s));
   od;
   L:=M;
   Print("n", j, " = ", Length(L), "\n");
   Unbind(M);GASMAN("collect"); #Cleaning Memory
  od;
  return Length(L);
 end;
\end{verbatim}
}
\end{algorithm}

A quick comparison with the following GAP command (included in the GAP package  \texttt{numericalsgps} \cite{numericalsgps}) 

\centerline{\texttt{Length(NumericalSemigroupsWithGenus(g))}} 
\noindent evidences that our code is slightly faster for $g \geq 23$. For instance, if $g = 26$ our function, \texttt{counting\_function(26)}, computes $[n_1, \ldots n_{26}]$ in $17.335$ seconds, while the above command takes $19.972$ seconds to compute $n_{26}$. Both computations have been performed running GAP in a Intel(R) Core(TM) i7-4770S CPU 3.10 GHz. This simple evidence opens a door to more efficient and faster implementations.

Finally, we observe that, by Proposition \ref{Prop main} and Corollary \ref{Cor UniPF}, we can take advantage of the function \texttt{NumericalSemigroupByGaps} included in the GAP package  \texttt{numericalsgps} to recover the whole set semigroups of genus $g$ from our code. This can be done by just replacing \texttt{return Length(L);} with
\begin{verbatim}
 return List(L, j->NumericalSemigroupByGaps(
                  Difference([1 .. (2*g-1)],j[1])));
\end{verbatim}  
With this modification the computation of the whole set of numerical semigroups of genus $g$ took $30.351$ seconds.

\medskip
\noindent\textbf{Acknowledgements.}
The authors would like to thank F\'elix Delgado for his constructive comments. This note was written during a visit of the second author to the IEMath-GR (Universidad de Granada, Spain), he would like to thank this institution for its hospitality. Corollary \ref{Cor IMNS18} was conjectured by the second author at the INdAM meeting: ``International meeting on numerical semigroups - Cortona 2018'', he would like to thank the organizers for such a nice meeting.


\end{document}